\documentclass[reqno]{amsart}
\usepackage{amsmath,amssymb,cite}
\usepackage[mathscr]{euscript}
\usepackage{mathtools}
\usepackage[scr=boondox,  
            cal=esstix]   
           {mathalpha}
\usepackage{xcolor}
\usepackage{hyperref}
\usepackage{comment}
\usepackage{dsfont}
\usepackage{tikz-cd}
\usepackage[normalem]{ulem}
\usepackage{bbold,dsfont}

\definecolor{bblue}{rgb}{.2,0.2,.8}

\theoremstyle{plain}
\newtheorem{theorem}{Theorem}[section]
\newtheorem{proposition}[theorem]{Proposition}
\newtheorem{lemma}[theorem]{Lemma}
\newtheorem{corollary}[theorem]{Corollary}

\theoremstyle{definition}
\newtheorem{definition}[theorem]{Definition}
\newtheorem{assumption}[theorem]{Assumption}

\theoremstyle{remark}
\newtheorem{remark}[theorem]{Remark}



\numberwithin{equation}{section}
\numberwithin{theorem}{section}

\def\be{\begin{equation}}
\def\ee{\end{equation}}
\def\bp{\begin{pmatrix}}
\def\ep{\end{pmatrix}}
\def\bea{\begin{eqnarray}}
\def\eea{\end{eqnarray}}

\def\\{\par\medskip}

\let\0=\noindent


\newcommand*{\defeq}{\mathrel{\vcenter{\baselineskip0.5ex \lineskiplimit0pt
                     \hbox{\scriptsize.}\hbox{\scriptsize.}}}%
                     =}



\renewcommand{\epsilon}{\varepsilon}


\title[A classification of $\mathbb{F}_{p_k}$-braces]{A classification of  $\mathbb{F}_{p^k}$-braces using bilinear forms}

\author{Riccardo Aragona}
\address{\noindent Riccardo Aragona \hfill\break\indent 
 DISIM, Universit\`a dell'Aquila
\hfill\break\indent 
67100 Coppito, L'Aquila, Italy
}
\email{riccardo.aragona@univaq.it}


\author{Giuseppe Nozzi}
\address{\noindent Giuseppe Nozzi \hfill\break\indent 
	DISIM, Universit\`a dell'Aquila
	\hfill\break\indent 
	67100 Coppito, L'Aquila, Italy
}
\email{giuseppe.nozzi@graduate.univaq.it}

\begin{document}
\subjclass[2020]{20N99, 20B35, 20K30, 16N20, 15A63, 11E08}

\keywords{$\mathbb{F}_{p^k}$-braces, radical rings, algebras over finite fields, symmetric bilinear forms, regular subgroups}
\thanks{All  the   authors  are   members  of   INdAM-GNSAGA
  (Italy).} 
\begin{abstract}
 Let $\mathbb{F}_{p^k}$ be a finite  field of odd characteristic $p$. In this paper we give a classification, up to isomorphism, of the commutative, $3$-nilpotent $\mathbb{F}_{p^k}$-algebras, starting from the connection with their bi-brace structure. 
 Such classification is the generalization in odd characteristic of the result proved by Civino at al.\ in characteristic $2$.
\end{abstract}

\noindent

\maketitle
\thispagestyle{empty}
\section{Introduction}
Let $\mathbb{F}_{p^k}$ be a finite  field of odd characteristic $p$ and let $V$ be an $n$-dimensional vector space over $\mathbb{F}_{p^k}$. Starting from elementary, abelian and regular subgroups of $\mathrm{AGL}(V,+)$ it is possible to construct an operation on $V$, denoted by $\circ$, such that the triple $(V,+,\circ)$ is a bi-brace of abelian type~\cite{RUMP2007153}. It is well known (see e.g. \cite{CDVS,MR3982254}) that there exists  a bijection between bi-braces, regular subgroups of $\mathrm{AGL}(V,+)$ normalized by $(V,+)$ and commutative, $3$-nilpotent  rings $(V,+,\cdot)$ , where the product is defined by
    $a\cdot b=a\circ b-a-b$.

After the definition of braces given by Rump in \cite{RUMP2007153} as a generalization of Jacobson radical rings, in recent years their study has interested many researchers due to the  connections of such structures to several other algebraic research topics. Some examples of such applications are the use of braces for analyzing nondegenerate involutive set-theoretical solutions of the Yang–Baxter equation \cite{BCJ,RUMP2007153}, or even for studying  the Hopf-Galois structures (see, e.g. ~\cite{hopfstef,hopfven}), and regular subgroups of the holomorph (see, e.g.~\cite{MR3465351,CDVS}). Another interesting application of braces is the one to cryptanalysis of block ciphers, under the hypothesis that $V\cdot V$ is a one-dimensional subspace of $V$, which was recently investigated in \cite{civino2,civino1,fedelecritto}. 

In according with the cryptanalytic application, our goal is to classify the isomorphism classes of $(V,+,\cdot)$, with $V\cdot V$ is a one-dimensional subspace. In particular we prove that the isomorphism classes of such algebras are completely determined by the equivalence classes of the symmetric bilinear forms $b:V\times V\longrightarrow \mathbb{F}_{p^k}$, defined by $b(a,b)=a\cdot b$. 
Such result generalizes in odd characteristic the classification proved in ~\cite{fedelecritto} for characteristic $2$.

The paper is organized as follows. In Section \ref{sec2}, we give the necessary background on brace theory, keeping particular emphasis to the relation between two-sided braces, radical rings and regular subgroups of the holomorph. In Section \ref{sec3}, starting from a vector space $V$ on $\mathbb{F}_{p^k}$, we define a skew bi-brace of abelian type, we give the properties of the associated commutative 3-nilpotent ring $(V,+,\cdot)$ and we describe the elements of the corresponding elementary abelian regular subgroup  of the holomorph of $V$. Finally, in Section \ref{sec4}, we show that the isomorphism classes of the commutative, $3$-nilpotent  $\mathbb{F}_{p^k}$-algebras $(V,+,\cdot)$ such that $V\cdot V$ has dimension $1$ are completely determined by the equivalence classes of the bilinear forms defined by the products of $V$.

\section{Preliminaries}\label{sec2}
In this section we give the basic notions on \emph{braces}, first defined by Rump~\cite{RUMP2007153}, and we recall some one-to-one correspondences between braces and other well-known algebraic structures.
\begin{definition}
    A skew (left) brace is a triple $(G,+,\circ)$, where $(G,+)$ and $(G,\circ)$ are groups and 
    \begin{equation}\label{leftskewbrace}
     a \circ (b + c)= a\circ b -a +a\circ c.
\end{equation}

A skew (right) brace is the analogous satisfying the condition 
\begin{equation}\label{rightskewbrace}
     (a+b)\circ c= a\circ c -c +b\circ c.
\end{equation}
\end{definition}

A skew (left/right) brace with abelian additive group $(G,+)$ is called \emph{skew brace of abelian type} (or simply \emph{(left/right) brace}).

A skew brace $(G,+,\circ)$ is said to be a \emph{skew two-sided brace} (or simply \emph{skew brace}) if  both Equation \eqref{leftskewbrace} and Equation \eqref{rightskewbrace} are satisfied, i.e.\ if it is a skew left and a skew right brace.
Notice that any skew brace such that its multiplicative group $(G,\circ)$ is an abelian group is two-sided.

Throughout this paper we will always consider braces with abelian multiplicative group. \medskip

Let $(G,+,\circ)$ be a brace,  it is possible to define a radical ring $(G,+,\cdot)$ by setting $a\cdot b\defeq a\circ b -a-b$, for $a,b\in G$ (see for example \cite{RUMP2007153}).\medskip

Let us denote by $\sigma:G\longmapsto \mathrm{Sym}(G)$ the right regular representation of $(G,+)$, with $\sigma_a:g\mapsto g+a$. Recall that the holomorph of a group $G$ is defined as the group
 \begin{equation*}
     \mathrm{Hol}(G)=\mathrm{Aut}(G)\ltimes \sigma(G)
 \end{equation*}
and it is isomorphic to the normaliser $N_{\mathrm{Sym}(G)}(\sigma(G))$ of $\sigma(G)$ in $\mathrm{Sym}(G)$.
 
Let $T_\circ$ be a regular  subgroup of $\mathrm{Hol}(G)$. We can give a labelling of the elements of $T_\circ$ by the elements of $G$ via the bijection $\tau$ defined by $a\mapsto \tau_a$, where $\tau_a$ is the unique element in $T_\circ$ sending $0$ to $a$. Since $T_\circ\leq\mathrm{Hol}(G)$, then each element $\tau_a\in T_\circ$ can be written uniquely as a product of an element $\gamma_a\in\mathrm{Aut}(G)$ and an element $\sigma_a\in\sigma(G)$.

We can define the regular subgroup $(G,\circ)$ of $\mathrm{Hol}(G)$ with the operation $\circ$ defined by
\begin{equation}
    a\circ b\defeq b\tau_a\quad \text{for each }a,b\in G,
\end{equation}
obtaining that $(G,\circ)$ and $T_\circ$ are isomorphic via the bijection $\tau$. In particular, we have a one to one correspondence between braces with additive group $(G,+)$ and regular  subgroups of $\mathrm{Hol}(G)$. For more details, see~\cite[Proposition 2.3]{MR3465351}.

\begin{definition}\label{bi-brace}
A brace $(G,+,\circ)$ is called \emph{bi-brace} if $(G,\circ,+)$ is also a brace.
\end{definition}

The same construction as above gives rise to a bijection between bi-braces $(G,+,\circ)$ and regular subgroups of $\mathrm{Hol}(G)$ normalized by $\sigma(G)$ (see~\cite[Theorem 3.1]{MR4130907}).
\begin{theorem}[\cite{CDVS,MR3982254}]\label{thmbrace}
    Let $(G,+)$ be an abelian group. The following data are equivalent:
    \begin{enumerate}
        \item[1.] a bi-brace $(G,+,\circ)$;
        \item[2.] an abelian, regular subgroup $N$ of $\mathrm{Hol}(G,+)$ normalized by $\sigma(G)$;
        \item[3.] a commutative, $3$-nilpotent  ring $(G,+,\cdot)$.
    \end{enumerate}
\end{theorem}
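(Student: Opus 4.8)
The plan is to prove the two equivalences $1\Leftrightarrow 2$ and $1\Leftrightarrow 3$ separately, treating $1\Leftrightarrow 3$ as the substantial part. For $1\Leftrightarrow 2$ I would simply invoke the two correspondences recalled just before the statement: braces with additive group $(G,+)$ are in bijection with regular subgroups of $\mathrm{Hol}(G)$ via the labelling $\tau$, and bi-braces correspond precisely to those regular subgroups that are in addition normalized by $\sigma(G)$. The only refinement needed is the adjective \emph{abelian}: under the labelling, the regular subgroup $N=T_\circ$ is isomorphic to $(G,\circ)$, so $N$ is abelian if and only if $(G,\circ)$ is, which is exactly the standing assumption that the multiplicative group of our braces is abelian. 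Hence $1\Leftrightarrow 2$ requires no new computation.

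For $1\Rightarrow 3$ I would set $a\cdot b\defeq a\circ b-a-b$ and check the ring axioms. Commutativity is immediate from $(G,\circ)$ being abelian, since $a\circ b=b\circ a$ forces $a\cdot b=b\cdot a$. The left brace identity \eqref{leftskewbrace} gives left distributivity and the right brace identity \eqref{rightskewbrace} gives right distributivity (the brace is two-sided because $(G,\circ)$ is abelian). Associativity then drops out of the associativity of the group operation $\circ$: expanding both sides of $a\circ(b\circ c)=(a\circ b)\circ c$ with $x\circ y=x+y+x\cdot y$ and using the two distributive laws, all terms cancel except $a\cdot(b\cdot c)$ on one side and $(a\cdot b)\cdot c$ on the other. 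Thus $(G,+,\cdot)$ is an associative commutative ring, and $(G,\circ)$ being a group means every element is quasi-regular, i.e.\ the ring is radical.

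The heart of the matter, and the step I expect to be the main obstacle, is the equivalence between the remaining bi-brace condition and the nilpotency $G\cdot G\cdot G=0$. Since the multiplicative group $(G,+)$ of the candidate second brace $(G,\circ,+)$ is abelian, it suffices to test a single (left) brace axiom, namely $a+(b\circ c)=(a+b)\circ\overline{a}\circ(a+c)$, where $\overline{a}$ is the $\circ$-inverse of $a$. The left-hand side equals $a+b+c+b\cdot c$. For the right-hand side I would first simplify $(a+b)\circ\overline{a}=b+b\cdot\overline{a}$ using $a+\overline{a}+a\cdot\overline{a}=0$, then compose with $a+c$ and again use $\overline{a}+a\cdot\overline{a}=-a$; after the cancellations the right-hand side becomes $a+b+c+b\cdot c+(b\cdot c)\cdot\overline{a}$. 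Hence the axiom holds for all $a,b,c$ if and only if $(b\cdot c)\cdot\overline{a}=0$ for all $a,b,c$. Because $a\mapsto\overline{a}$ is a bijection of $G$, this is exactly $G\cdot G\cdot G=0$.

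This single computation yields both directions of $1\Leftrightarrow 3$: read forward it shows that a bi-brace forces $G\cdot G\cdot G=0$, while read backward it shows that an associative commutative radical ring with $G\cdot G\cdot G=0$, equipped with $a\circ b\defeq a+b+a\cdot b$, satisfies the second brace axiom and is therefore a bi-brace (the fact that $(G,+,\circ)$ is itself a brace being the standard radical-ring construction of Rump). The main subtlety to get right is the bookkeeping of the $\circ$-inverse $\overline{a}$ and the repeated use of the relation $\overline{a}+a\cdot\overline{a}=-a$; once that identity is in hand the cancellations are forced and the nilpotency condition emerges cleanly.
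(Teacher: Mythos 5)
Your proposal is correct, but it is worth noting that the paper itself gives no proof of this statement: Theorem \ref{thmbrace} is quoted from \cite{CDVS,MR4130907}, with the equivalence between 1 and 2 recalled from the labelling construction $a\mapsto\tau_a$ described just before it. Your treatment of $1\Leftrightarrow 2$ therefore coincides with the paper's (both reduce to the cited correspondences, with abelianness of $N$ matching abelianness of $(G,\circ)$ under $\tau$). For $1\Leftrightarrow 3$ you take a genuinely different route from the one the paper uses when it later proves the analogous statement in its own setting (Proposition \ref{propbibrace}): there, the equivalence between the bi-brace condition and $V^3=0$ is obtained inside the holomorph, via the commutator identity $[\sigma_a,\tau_b]=\sigma_{a\cdot b}$ of \cite{CDVS} and the homomorphism condition $\gamma_{a+b}=\gamma_{a\circ b}=\gamma_a\gamma_b$ on the gamma function --- essentially by passing through condition 2. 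You instead verify the second brace axiom $a+(b\circ c)=(a+b)\circ\overline{a}\circ(a+c)$ purely in ring-theoretic terms and show it is equivalent to $(b\cdot c)\cdot\overline{a}=0$ for all $a,b,c$, hence to $G\cdot G\cdot G=0$ by bijectivity of $a\mapsto\overline{a}$. Your computation checks out: the key cancellation $b\cdot\overline{a}+b\cdot a+b\cdot(\overline{a}\cdot a)=b\cdot(\overline{a}+a+\overline{a}\cdot a)=b\cdot 0=0$ uses commutativity, associativity and distributivity of $\cdot$, all of which you derive from the first brace structure \emph{before} invoking the second brace axiom, so there is no circularity; and the same identity read backwards, together with Rump's construction, gives $3\Rightarrow 1$. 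The advantage of your argument is that it is elementary and self-contained, needing no holomorph machinery for $1\Leftrightarrow 3$; the advantage of the route through regular subgroups taken in the cited sources and mirrored in Section \ref{sec3} is that it treats the three conditions uniformly and yields the explicit description of $T_\circ$ inside $\mathrm{AGL}(V,+)$ that the rest of the paper actually uses.
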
\medskip

Finally we conclude this section with the following useful definitions
\begin{definition}\label{gammafunction}
    The function $\gamma:(G,\circ)\longmapsto \mathrm{Aut}(G)$ defined by $\gamma_a:b\mapsto -a+a\circ b$ is called \emph{gamma function} (for more details see e.g., \cite{MR3647970}).
\end{definition}
In \cite{MR3647970} it has been proved that regular subgroups of the holomorph are in bijective correspondence with these functions.
\begin{definition}[\cite{MR4648557}]
    A bi-brace $(G,+,\circ)$ is called $\mathbb{F}_{p^k}$-brace if $(G,+)$ is an $\mathbb{F}_{p^k}$-vector space and  the values of the gamma function are automorphisms of $\mathbb{F}_{p^k}$-vector spaces.
\end{definition}

It is worth to highlight that if $(G,+,\circ)$ is a two-sided brace, the notion of $\mathbb{F}_{p^k}$-braces coincides with the classical notion of $\mathbb{F}_{p^k}$-vector spaces (see Example 2 in~\cite{MR4648557}).

\section{The setting}\label{sec3}
Let $p$ be an odd prime integer and $n\ge 2$ be a positive integer. Let $V$ be an $n$-dimensional vector space over $\mathbb{F}_{p^k}$. Let denote us by $\mathbb{F}_{p^k}^\times$ the nonzero elements of $\mathbb{F}_{p^k}$. Let $\mathrm{Sym}(V)$ be the group of all the permutations on $(V,+)$ and $\mathrm{GL}(V,+)$ the group of invertible $\mathbb{F}_{p^k}$-endomorphisms on $(V,+)$. Let us denote by $T_+$ the translation group of   $(V,+)$, i.e.
\begin{equation*}
    T_+=\lbrace \sigma_a:x\mapsto x+a \mid a\in V \rbrace \leq \mathrm{Sym}(V)
\end{equation*}
and by $\mathrm{AGL}(V,+) \defeq \mathrm{GL}(V,+)\ltimes T_+=\mathrm{Hol(V)}$ the affine group of $V$. It is well known that $T_+$ is an elementary abelian regular subgroup of $\mathrm{Sym}(V)$ isomorphic to $(V,+)$ and $\mathrm{AGL}(V,+)$ is the normaliser of $T_+$ in $\mathrm{Sym}(V,+)$. 


\begin{theorem}[Dixon]\cite{MR0281782}\label{thm:dixon}
Let $X$ be a finite set and $\Phi,\Psi$ be regular subgroups of $Sym(X)$. If $\Phi\cong \Psi$, there exists $g\in Sym(X)$ such that $\Psi=\Phi^g=g^{-1}\Phi g$.
\end{theorem}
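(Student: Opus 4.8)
The plan is to exploit the fact that a regular permutation group is, up to relabelling, nothing but the regular representation of the underlying abstract group on itself, so that an abstract isomorphism between two such groups can be upgraded to a conjugacy by transporting one labelling onto the other.

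First I would record the two structural consequences of regularity. Since $\Phi$ is regular on $X$, it is transitive with trivial point stabilizers; hence, fixing a base point $x_0\in X$, the orbit map $\alpha\colon\Phi\to X$, $\alpha(\phi)=\phi(x_0)$, is a bijection. Symmetrically, fixing $y_0\in X$, the map $\beta\colon\Psi\to X$, $\beta(\psi)=\psi(y_0)$, is a bijection; in particular $|\Phi|=|X|=|\Psi|$, consistently with $\Phi\cong\Psi$.

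Next, given an abstract group isomorphism $\theta\colon\Phi\to\Psi$, I would define the candidate conjugator as the composite bijection $g\defeq \beta\circ\theta\circ\alpha^{-1}\in\mathrm{Sym}(X)$; explicitly, $g(\phi(x_0))=\theta(\phi)(y_0)$ for every $\phi\in\Phi$. The heart of the argument is then to verify that $g\phi g^{-1}=\theta(\phi)$ for each $\phi\in\Phi$. To check this one evaluates both sides on an arbitrary point, written as $y=\theta(\psi)(y_0)$ with $\psi\in\Phi$ (possible since $\beta\circ\theta$ is onto $X$): one gets $g^{-1}(y)=\psi(x_0)$, so $\phi\big(g^{-1}(y)\big)=(\phi\psi)(x_0)$, and finally $g\big((\phi\psi)(x_0)\big)=\theta(\phi\psi)(y_0)=\theta(\phi)\theta(\psi)(y_0)=\theta(\phi)(y)$. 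Thus $g\phi g^{-1}$ and $\theta(\phi)$ agree on all of $X$, hence coincide. Since $\theta$ is a bijection of $\Phi$ onto $\Psi$, this yields $g\Phi g^{-1}=\Psi$, and replacing $g$ by $g^{-1}$ rewrites the conclusion in the stated form $\Psi=\Phi^g=g^{-1}\Phi g$.

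The only places where the hypotheses are genuinely used are the following: regularity is what makes $\alpha$ and $\beta$ bijections, so that $g$ is a well-defined permutation of $X$, while the multiplicativity $\theta(\phi\psi)=\theta(\phi)\theta(\psi)$ of the isomorphism is exactly what forces the conjugation identity in the chain above. I expect no real obstacle beyond careful bookkeeping of the left/right conventions for the action and for conjugation; once the base points and the direction of the orbit maps are fixed consistently, the verification is a one-line chase.
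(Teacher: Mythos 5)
Your proof is correct. Note, however, that the paper offers no proof of this statement to compare against: it is quoted as a classical result from Dixon's book, and your transport-of-structure argument (identifying each regular subgroup with $X$ via the orbit bijections $\alpha,\beta$ and conjugating by $g=\beta\circ\theta\circ\alpha^{-1}$) is precisely the standard proof of that cited fact. The only point deserving care is the one you already flag: the conjugation identity $g\phi g^{-1}=\theta(\phi)$ depends on composing permutations consistently with how they act (the paper elsewhere uses right actions, e.g.\ $x\sigma_a$), but the statement itself is convention-independent, so your verification goes through as written.
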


Let $T_\circ$ be  be elementary abelian regular subgroup of $\mathrm{AGL}(V,+)$. By Theorem \ref{thm:dixon}, there exists $g\in Sym(V)$ such that $T_\circ=T_+^g$. If $a\in V$, we can denote by $\tau_a$ the unique permutation in $T_\circ$ sending  $0$ to $a$, and so
\begin{equation*}
    T_\circ=\lbrace \tau_a \mid a\in V\rbrace.
\end{equation*}
It is possible to define an abelian group $(V,\circ)$ isomorphic to $T_\circ$, where the operation $\circ$ is defined by
\begin{equation}\label{defcirc}
    a\circ b=a\tau_b\quad \text{for any}\ a,b\in V.
\end{equation}
Since, by hypothesis, $T_\circ$ is a subgroup of $\mathrm{AGL}(V,+)$, then each element $\tau_a\in T_\circ$ can be written in a unique way as a product of an element of $GL(V,+)$ and an element of $T_+$, in other words
\begin{equation}\label{formatau}
    \tau_a=\gamma_a\sigma_a,
\end{equation}
where $\gamma_a\in \mathrm{GL}(V,+)$ and $\sigma_a\in T_+$. Notice that the homomorphism
\begin{equation*}
    \gamma:(V,\circ)\longmapsto \mathrm{GL}(V,+),\ a\mapsto (\gamma_a:b\mapsto -a+a\circ b)
\end{equation*}
is precisely the gamma function defined in Equation~\eqref{gammafunction}, and $(V,+,\circ)$ is an $\mathbb{F}_{p^k}$-brace.

Notice that for all $a,b,c\in V$
\begin{align*}
    (a+b)\circ c&=(a+b)\tau_c\\
    &=(a+b)\gamma_c\sigma_c\\
    &=a\gamma_c+b\gamma_c + c\\
    &=(a\circ c)+(b\circ c)-c
\end{align*}
and so, since $(V,+)$ and $(V,\circ)$ are abelian groups,  $(V,+,\circ)$ is a skew brace of abelian type. 

For any $a\in V$ we can define the endomorphism $\delta_a=\gamma_a-\mathbb{1}_V$ of $V$ and we can define a product on $V$ by
\begin{equation}\label{defprodotto}
    a\cdot b=a\delta_b\quad \text{for}\ b\in V.
\end{equation}
Notice that by Equations \eqref{formatau} and \eqref{defprodotto}, for every $a,b\in V$, we have
\begin{equation}\label{eq:circ}
    a\circ b=a+b+a\cdot b.
\end{equation}

\begin{definition}
    A ring $(V,+,\cdot)$ is said to be a \emph{(Jacobson) radical} ring if every element $a,b\in V$ is invertible with respect to the circle operation defined by $a\circ b=a+b+a\cdot b$.
\end{definition}
Finally the following result holds (see also \cite{CDVS}).

\begin{theorem}\label{thm:radical}
$(V,+,\cdot)$ is a commutative, associative $\mathbb{F}_{p^k}$-algebra such that the resulting ring is radical.
\end{theorem}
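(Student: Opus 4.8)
The plan is to leverage the three ingredients already in place: the abelian group $(V,\circ)$, the two brace relations, and the homomorphism $\gamma$ with $\mathbb{F}_{p^k}$-linear values. I would first dispose of the two cheap properties. Since $\tau:(V,\circ)\to T_\circ$ is a group isomorphism and $T_\circ$ is abelian, $(V,\circ)$ is an abelian group whose identity is $0$ (the element $\tau_0$ fixing $0$ is the identity of the regular group $T_\circ$, so $0\circ b=0\tau_b=b$ and $a\circ 0=a$). Radicality is then immediate: by \eqref{eq:circ} the circle operation attached to the ring is exactly $\circ$, and every element is $\circ$-invertible because $(V,\circ)$ is a group. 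Commutativity follows by comparing $a\circ b=a+b+a\cdot b$ with $b\circ a=b+a+b\cdot a$ and cancelling, using that $\circ$ is abelian.

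Next I would record the two distributive laws. Expanding the left brace relation \eqref{leftskewbrace} through \eqref{eq:circ} and cancelling the $+$-terms yields $a\cdot(b+c)=a\cdot b+a\cdot c$, while the right relation already verified in the text as $(a+b)\circ c=(a\circ c)+(b\circ c)-c$ yields $(a+b)\cdot c=a\cdot c+b\cdot c$. Equivalently, left distributivity says the map $b\mapsto\delta_b$ is additive, i.e.\ $\delta_{b+c}=\delta_b+\delta_c$, and right distributivity is just the additivity of the fixed endomorphism $\delta_c$. For the $\mathbb{F}_{p^k}$-algebra structure I would note that each $\delta_b=\gamma_b-\mathbb{1}_V$ is $\mathbb{F}_{p^k}$-linear since $\gamma_b\in\mathrm{GL}(V,+)$ is; hence $a\mapsto a\cdot b=a\delta_b$ is $\mathbb{F}_{p^k}$-linear, and by the commutativity just proved the map $b\mapsto a\cdot b$ is $\mathbb{F}_{p^k}$-linear as well. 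Thus $\cdot$ is $\mathbb{F}_{p^k}$-bilinear.

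The crux is associativity, and here the idea is to turn the homomorphism property of $\gamma$ into an operator identity. From $\tau_{a\circ b}=\tau_a\tau_b$ and the unique factorization $\tau_a=\gamma_a\sigma_a$ in $\mathrm{AGL}(V,+)$ one obtains $\gamma_{a\circ b}=\gamma_a\gamma_b$; writing $\gamma=\mathbb{1}_V+\delta$ and expanding (with the right-action convention) gives $\delta_{a\circ b}=\delta_a+\delta_b+\delta_a\delta_b$. On the other hand, applying the additivity of $\delta$ to $a\circ b=a+b+a\cdot b$ gives $\delta_{a\circ b}=\delta_a+\delta_b+\delta_{a\cdot b}$. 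Comparing the two expressions cancels $\delta_a+\delta_b$ and leaves the single clean identity $\delta_{a\cdot b}=\delta_a\delta_b$. Associativity is then one line: $a\cdot(b\cdot c)=a\,\delta_{b\cdot c}=a(\delta_b\delta_c)=(a\delta_b)\delta_c=(a\cdot b)\cdot c$.

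The main obstacle is exactly this associativity step: producing the operator identity $\delta_{a\cdot b}=\delta_a\delta_b$ requires combining the homomorphism property of $\gamma$ (which I would either derive from the unique factorization in $\mathrm{AGL}(V,+)$ or invoke as the gamma function) with the additivity of $\delta$ coming from left distributivity, and one must keep the right-action convention $a\delta_b$ consistent throughout so that the composite $\delta_a\delta_b$ is read in the correct order. Everything else reduces to short cancellations inside $(V,+,\circ)$.
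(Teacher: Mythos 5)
Your proof is correct. The paper does not actually supply a proof of Theorem \ref{thm:radical} (it is quoted from \cite{CDVS}), and your argument is essentially the standard one from that reference: commutativity and radicality from the abelianness and regularity of $T_\circ$, $\mathbb{F}_{p^k}$-bilinearity from the linearity of the maps $\gamma_b$, and associativity via the operator identity $\delta_{a\cdot b}=\delta_a\delta_b$, obtained by comparing the homomorphism property $\gamma_{a\circ b}=\gamma_a\gamma_b$ (unique factorization in $\mathrm{GL}(V,+)\ltimes T_+$) with the additivity of $b\mapsto\delta_b$.
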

   In order to give a bi-brace structure over $(V,+,\circ)$, throughout the paper we will assume the following assumption.
\begin{assumption}\label{assum1}
    $T_+ <\mathrm{AGL}(V,\circ)\cong \mathrm{N}_{\mathrm{Sym(V)}}(T_\circ)$.
\end{assumption}
In \cite[Lemma 3]{CDVS}, the authors proved that $[\sigma_a,\tau_b]=\sigma_{a\cdot b}$ for every $a,b\in V$, from which  it follows that $T_+ <\mathrm{AGL}(V,\circ)$ if and only if $\sigma_{a\cdot b}\in T_\circ$. Hence, under the Assumption \ref{assum1}, by Equation \eqref{eq:circ} we have
    \begin{equation*}
        \sigma_{a\cdot b}\in T_\circ \cap T_+ =\lbrace \sigma_x\mid x\in \mathrm{Ann}(V)\rbrace,
    \end{equation*}
    where $\mathrm{Ann}(V)=\{a\in V \mid a\cdot V=0\}$ is the annihilator of the algebra $(V,+,\cdot)$. Hence $T_+ <\mathrm{AGL}(V,\circ)$ if and only if $a\cdot b\cdot c=0$ for all $a,b,c\in V$. Therefore 
    \begin{equation}\label{VVinAnn}
    V\cdot V\subseteq \mathrm{Ann}(V)
    \end{equation}
    and as a consequence we obtain $c\delta_{a\cdot b}=c\cdot a\cdot b=0$ for every $a,b,c\in V$, and so
    \begin{equation*}
        \gamma_{a\cdot b}=\mathbb{1}_n\quad \mathrm{for\ all\ } a,b\in V.
    \end{equation*}
By Equation \eqref{eq:circ}, Definition \ref{gammafunction} and reminding that $V^3=0$, we get that for all $x,a,b\in V$
\begin{align*}
 x\gamma_{a+b}&=-a-b+x\circ(a+b)\\
 &=-a-b+x+(a+b)+x\cdot(a+b) \\
 &=x+x\cdot a + x\cdot b
\end{align*}
and
\begin{align*}
 x\gamma_{a\circ b}&=-(a\circ b)+x\circ(a\circ b)\\
 &=-(a+b+a\cdot b)+x\circ (a+b+a\cdot b) \\
 &=-(a+b+a\cdot b) + x+(a+b+a\cdot b)+x\cdot (a+b+a\cdot b)\\
 &=x+x\cdot a + x\cdot b
\end{align*}
Thus, $T_+ <\mathrm{AGL}(V,\circ)$ if and only if 
\begin{equation*}
   \gamma_{a+b}=\gamma_{a\circ b}=\gamma_a\gamma_b
\end{equation*}
from which follows that $(V,+,\circ)$ is a bi-brace.\ In other words we proved the following result, which generalizes \cite[Proposition 4.1]{MR3982254}.
\begin{proposition}\label{propbibrace}
    Let $(V,+,\cdot)$ be a nilpotent $\mathbb{F}_{p^k}$-algebra. The triple $(V,+,\circ)$ is a bi-brace if and only if $V^3=0$.
\end{proposition}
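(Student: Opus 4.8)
The plan is to establish the equivalence through the dictionary between bi-braces and regular subgroups of the holomorph recalled in Section~\ref{sec2}, rather than by expanding the brace axioms for $(V,\circ,+)$ directly. Since $(V,+,\circ)$ is already a brace of abelian type, by the bijection of \cite[Theorem 3.1]{MR4130907} (item (2) of Theorem~\ref{thmbrace}) it is a bi-brace exactly when the associated regular subgroup $T_\circ\le\mathrm{AGL}(V,+)=\mathrm{Hol}(V)$ is normalized by the translation group $T_+=\sigma(V)$. So the whole statement reduces to deciding \emph{when} $T_+\le\mathrm{N}_{\mathrm{Sym}(V)}(T_\circ)$ holds, and matching that with $V^3=0$; note that here we characterize this normalization rather than postulate it as in Assumption~\ref{assum1}.

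First I would convert normalization into a commutator condition. As $T_\circ$ is a subgroup and $\{\tau_b\}_{b\in V}$ exhausts it, $\sigma_a$ normalizes $T_\circ$ iff $\sigma_a\tau_b\sigma_a^{-1}\in T_\circ$ for all $b$; writing $\sigma_a\tau_b\sigma_a^{-1}=[\sigma_a,\tau_b]\,\tau_b$ this is equivalent to $[\sigma_a,\tau_b]\in T_\circ$. The one external ingredient is the commutator identity $[\sigma_a,\tau_b]=\sigma_{a\cdot b}$ of \cite[Lemma 3]{CDVS}, which turns the whole normalization condition into: $\sigma_{a\cdot b}\in T_\circ$ for every $a,b\in V$.

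Next I would pin down $T_\circ\cap T_+$. Using the factorization $\tau_c=\gamma_c\sigma_c$ from \eqref{formatau}, an element $\tau_c$ lies in $T_+$ iff $\gamma_c=\mathbb{1}_V$, i.e.\ iff $\delta_c=0$, which by commutativity of the product means precisely $c\in\mathrm{Ann}(V)$, and for such $c$ one has $\tau_c=\sigma_c$. Hence $T_\circ\cap T_+=\{\sigma_x\mid x\in\mathrm{Ann}(V)\}$. Since $\sigma_{a\cdot b}\in T_+$ automatically, the membership $\sigma_{a\cdot b}\in T_\circ$ is equivalent to $a\cdot b\in\mathrm{Ann}(V)$. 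Quantifying over all $a,b$ gives $V\cdot V\subseteq\mathrm{Ann}(V)$, which unfolds to $(a\cdot b)\cdot c=0$ for all $a,b,c$, i.e.\ $V^3=0$. Since every step is a genuine biconditional, chaining them yields ``$(V,+,\circ)$ is a bi-brace $\iff V^3=0$'', proving both implications at once.

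The argument is idea-light once the commutator formula is in hand, so I expect the only delicate point to be bookkeeping: one must keep the universal quantifier on $a,b$ so that the conclusion is the full inclusion $V\cdot V\subseteq\mathrm{Ann}(V)$ (equivalently $V^3=0$) and not a weaker pointwise statement, and one must handle the inverse and commutator manipulations consistently with the abelian additive conventions. As an independent cross-check---and the route actually carried out in the discussion preceding the statement---I would verify the equivalence via the gamma function: expanding $x\gamma_{a+b}$ and $x\gamma_{a\circ b}$ through \eqref{eq:circ} and Definition~\ref{gammafunction} and invoking $V^3=0$ shows both equal $x+x\cdot a+x\cdot b$, whence $\gamma_{a+b}=\gamma_{a\circ b}=\gamma_a\gamma_b$; the additivity of $\gamma$ is exactly the bi-brace property, recovering the same conclusion.
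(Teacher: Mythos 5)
Your proposal is correct and follows essentially the same route as the paper, whose own proof is the discussion preceding the proposition: the commutator identity $[\sigma_a,\tau_b]=\sigma_{a\cdot b}$ from \cite[Lemma 3]{CDVS}, the identification $T_\circ\cap T_+=\lbrace \sigma_x\mid x\in \mathrm{Ann}(V)\rbrace$, and the resulting equivalence of the normalization condition $T_+<\mathrm{AGL}(V,\circ)$ with $V\cdot V\subseteq\mathrm{Ann}(V)$, i.e.\ $V^3=0$. The only difference is one of packaging: where the paper closes the loop by explicitly verifying $\gamma_{a+b}=\gamma_{a\circ b}=\gamma_a\gamma_b$ to get the bi-brace property, you instead invoke the bijection of \cite[Theorem 3.1]{MR4130907} between bi-braces and regular subgroups normalized by $\sigma(G)$ (item 2 of Theorem \ref{thmbrace}), and you then reproduce the paper's gamma-function computation anyway as a cross-check.
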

We conclude this section constructing a matrix form for the endomorphism $\gamma_a$, $a\in V$. Let $\{e_1,\ldots,e_n\}$ be the canonical basis of $V$ and let us set $d$ the dimension of $\mathrm{Ann}(V)$.\ Without lost of generality we can assume that $\mathrm{Ann}(V)=\mathrm{span}\lbrace e_{m+1},\dots,e_n\rbrace$ with $m=n-d$. For the case $k=1$ of the following result, see also \cite[Theorem 3.11]{calderini}.
\begin{theorem}\label{formagamma}
For every $a\in V$ there exists a matrix $\Theta_a\in (\mathbb{F}_{p^k})^{m\times d}$ such that 
\begin{equation*}
    \gamma_a=\begin{pmatrix}
        \mathbb{1}_m & \Theta_a\\
        0& \mathbb{1}_d
    \end{pmatrix}.
\end{equation*}
Moreover, $\Theta_a$ is the zero matrix for every $a\in \mathrm{Ann}(V)$.
\end{theorem}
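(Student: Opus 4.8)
The plan is to read the matrix of $\gamma_a$ directly off its action on the canonical basis, using the containment $V\cdot V\subseteq\mathrm{Ann}(V)$ already recorded in Equation~\eqref{VVinAnn}. The key preliminary identity is
\begin{equation*}
    x\gamma_a=x+x\cdot a\qquad\text{for all }x,a\in V,
\end{equation*}
which follows immediately from $\delta_a=\gamma_a-\mathbb{1}_V$ together with $x\cdot a=x\delta_a$ (the special case of Equation~\eqref{defprodotto}): indeed $x\gamma_a=x+x\delta_a=x+x\cdot a$. Since $(V,+,\circ)$ is an $\mathbb{F}_{p^k}$-brace, each $\gamma_a$ is $\mathbb{F}_{p^k}$-linear, so it suffices to evaluate it on the basis vectors $e_1,\dots,e_n$.

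With vectors written as rows acting on the right (as in the paper's notation $x\gamma_a$), the $i$-th row of the matrix of $\gamma_a$ is the coordinate vector of $e_i\gamma_a=e_i+e_i\cdot a$. By Equation~\eqref{VVinAnn} the term $e_i\cdot a$ lies in $\mathrm{Ann}(V)=\mathrm{span}\{e_{m+1},\dots,e_n\}$, hence its coordinate vector is supported on the last $d$ positions. Consequently \emph{every} row of the matrix equals $e_i$ plus a vector supported on columns $m+1,\dots,n$, which already pins down the shape of the matrix.

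I would then assemble the blocks by splitting into two cases. For $i>m$ we have $e_i\in\mathrm{Ann}(V)$, so $e_i\cdot a=0$ and thus $e_i\gamma_a=e_i$; these rows form the bottom block $\begin{pmatrix}0 & \mathbb{1}_d\end{pmatrix}$. For $1\le i\le m$ the row is $e_i$ plus a contribution supported on the last $d$ columns, so its first $m$ entries reproduce $e_i$, giving the identity block $\mathbb{1}_m$, while its last $d$ entries define the $i$-th row of a matrix $\Theta_a\in(\mathbb{F}_{p^k})^{m\times d}$ whose $(i,j)$-entry is the coefficient of $e_{m+j}$ in $e_i\cdot a$. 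This is exactly the asserted block form. For the final claim, if $a\in\mathrm{Ann}(V)$ then by commutativity $e_i\cdot a=a\cdot e_i=0$ for every $i$, so every entry of $\Theta_a$ vanishes and $\Theta_a$ is the zero matrix (equivalently $\gamma_a=\mathbb{1}_n$).

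The argument is essentially bookkeeping, and no step is genuinely hard; the two points requiring care are fixing the action convention consistently with the paper's $x\gamma_a$ notation—so that the nonzero off-diagonal block sits in the upper right rather than the lower left—and invoking Equation~\eqref{VVinAnn} to guarantee that $e_i\cdot a$ never has components among the first $m$ coordinates. It is precisely this containment that forces the top-left block to be exactly $\mathbb{1}_m$ and the bottom-left block to vanish.
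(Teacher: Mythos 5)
Your proof is correct and follows essentially the same route as the paper's: both read the block structure of $\gamma_a$ off its action on the canonical basis, using the containment $V\cdot V\subseteq\mathrm{Ann}(V)$ of Equation~\eqref{VVinAnn} to confine all product terms to the last $d$ coordinates and the annihilator property to make the bottom rows trivial. The one difference is minor but in your favor: you compute the rows $e_i\gamma_a=e_i+e_i\cdot a$ directly for an arbitrary fixed $a$, whereas the paper first determines $\gamma_{e_i}$ on the basis vectors and then assembles $\gamma_a$ for general $a=a_1e_1+\dots+a_ne_n$ by linearity in $a$ --- a final step it leaves implicit (it rests on bilinearity of the product), which your single-pass computation avoids.
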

\begin{proof}
Notice that $a\in \mathrm{Ann}(V)$ if and only if $a\circ b=a+b$ for all $b\in V$. It follows that $e_j\circ e_i=e_j+e_i$ for $j=m+1,\dots,n$ and $i=1,\dots, n$. In other words
\begin{equation}\label{1}
    e_j\circ e_i=e_j\gamma_{e_i}+e_i=e_j+e_i,
\end{equation}
on the other hand for $j=1,\dots,m$ and $i=1,\dots, m$
\begin{equation}\label{2}
   e_j\circ e_i=e_j\gamma_{e_i}+e_i=e_j+e_i+e_j\cdot e_i
\end{equation}
where $e_j\cdot e_i\in \mathrm{Ann}(V)=\mathrm{span}\{e_{m+1},\dots,e_n\}$. Thus, by Equation \eqref{1} and \eqref{2} we obtain 
\begin{equation*}
    \gamma_{e_i}=\begin{pmatrix}
        \mathbb{1}_m& \Theta_{e_i}\\
        0&\mathbb{1}_d
    \end{pmatrix}\ \text{for }i=1,\dots,m
\end{equation*}
and $\gamma_{e_i}=\mathbb{1}_n$ for $i=m+1,\dots,n$. Moreover, denoting by $\Theta_{i,j}$ the $j$-$th$ row of the matrix $\Theta_i$, we get that $e_i\cdot e_j=(\underbrace{0,\dots,0}_m,\Theta_{i,j})$. Finally, for every $a=a_1e_1+\dots+a_ne_n\in V$ 
\begin{equation*}
\gamma_a=\begin{pmatrix}
    \mathbb{1}_m& a_1\Theta_{e_1}+\dots+a_m\Theta_{e_m}\\
    0&\mathbb{1}_d
\end{pmatrix}.\qedhere
\end{equation*}
\end{proof}

\section{Isomorphism classes of \texorpdfstring{$(V,+,\cdot)$}{(V,+,)}}\label{sec4}
In this section $(V,+,\cdot)$ will denote an $n$-dimensional commutative, $3$-nilpotent $\mathbb{F}_{p^k}$-algebra with product defined in Eq. \eqref{defprodotto} and  $\mathrm{Ann}(V)=\mathrm{span}\{e_{m+1},\dots,e_n\}$ of dimension $d=n-m$. Let $\Theta_{i}$ be the $m\times d$ matrix with entries in $\mathbb{F}_{p^k}$ associated to the vector $e_i$ of the canonical basis of $V$, defined  in Theorem \ref{formagamma}. Recall that $\Theta_{j}=0$ for $j=m+1,\dots,n$.
\begin{definition}
We shall say that the $m\times m$ matrix $\Theta\defeq [\Theta_{1} \dots \Theta_{m}]$ with entries in $(\mathbb{F}_{p^k})^{d}$ is the \emph{defining matrix} with respect to the canonical basis $\{e_1,\dots,e_n\}$ for the algebra $(V,+,\cdot)$.
\end{definition}

\begin{lemma}\label{theorem:theoremtheta}
The defining matrix $\Theta$ of the algebra $(V,+,\cdot)$ has the following properties:
\begin{enumerate}
    \item $\Theta$ is symmetric;
    \item each $\mathbb{F}_{p^k}$-linear combination of  $\Theta_1,\dots,\Theta_m$ is non-zero, except the one with all zero coefficients.
\end{enumerate}
\end{lemma}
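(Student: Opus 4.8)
The plan is to prove each property by exploiting the structural identities already established, namely that the product $e_i \cdot e_j$ lands in $\mathrm{Ann}(V)$ and is encoded by the matrix entries $\Theta_{i,j}$.

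For property (1), I would start from the explicit description obtained in the proof of Theorem \ref{formagamma}, where we found $e_i \cdot e_j = (\underbrace{0,\dots,0}_m, \Theta_{i,j})$, so that $\Theta_{i,j}$ (the $j$-th row block of $\Theta_i$) records the annihilator-component of the product $e_i \cdot e_j$. The symmetry of $\Theta$ then reduces to the identity $\Theta_{i,j} = \Theta_{j,i}$, which is nothing but the commutativity $e_i \cdot e_j = e_j \cdot e_i$ guaranteed by Theorem \ref{thm:radical}. Thus the first part is essentially a translation of commutativity of the algebra into the symmetry of the defining matrix, and I expect it to be short.

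For property (2), the statement asserts that the vectors $\Theta_1, \dots, \Theta_m$ (viewed as rows with entries in $(\mathbb{F}_{p^k})^d$) are $\mathbb{F}_{p^k}$-linearly independent. I would argue by contradiction: suppose some nontrivial combination $\sum_{i=1}^m a_i \Theta_i = 0$ with not all $a_i = 0$. Setting $a = a_1 e_1 + \dots + a_m e_m$, the vanishing of this combination says precisely that the top-right block of $\gamma_a$ is zero, hence $\gamma_a = \mathbb{1}_n$ and $a \cdot V = 0$, i.e. $a \in \mathrm{Ann}(V)$. But $a$ is a combination of $e_1, \dots, e_m$ only, and by our normalization $\mathrm{Ann}(V) = \mathrm{span}\{e_{m+1},\dots,e_n\}$; since $e_1,\dots,e_n$ is a basis, the only element of $\mathrm{span}\{e_1,\dots,e_m\}$ lying in $\mathrm{Ann}(V)$ is $0$, forcing all $a_i = 0$, a contradiction.

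The main thing to get right is the bookkeeping between the two indexings of the blocks: $\Theta_i$ is the $m \times d$ matrix attached to $e_i$ in Theorem \ref{formagamma}, while $\Theta = [\Theta_1 \dots \Theta_m]$ is assembled with entries in $(\mathbb{F}_{p^k})^d$, and $\Theta_{i,j}$ denotes the $j$-th row of $\Theta_i$. I do not anticipate a serious obstacle; both properties are direct consequences of facts already proved (commutativity from Theorem \ref{thm:radical}, and the minimality of the annihilator dimension built into the choice $m = n - d$). The only subtlety is to confirm that a nonzero linear combination producing the zero matrix would genuinely force membership in $\mathrm{Ann}(V)$, which follows because $\gamma_a = \mathbb{1}_n$ is equivalent to $\delta_a = 0$, equivalently $a \cdot V = 0$.
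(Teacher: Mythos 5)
Your proposal is correct and follows essentially the same route as the paper's own proof: symmetry of $\Theta$ from commutativity via the identity $e_i\cdot e_j=(0,\dots,0,\Theta_{i,j})$, and linear independence by contradiction, noting that a vanishing nontrivial combination forces $\gamma_a=\mathbb{1}_n$ and hence $a\in\mathrm{Ann}(V)$, contradicting $\mathrm{Ann}(V)=\mathrm{span}\{e_{m+1},\dots,e_n\}$. Your explicit remark that $\gamma_a=\mathbb{1}_n$ is equivalent to $a\cdot V=0$ (via $\delta_a=0$ and commutativity) makes precise a step the paper leaves implicit, but the argument is the same.
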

\begin{proof} 
By Theorem \ref{formagamma}, we have $e_i\cdot e_j=(\underbrace{0,\dots,0}_m,\Theta_{i,j})$ and, since $(V,+,\cdot)$ is a commutative algebra, $\Theta_{i,j}=\Theta_{j,i}$ and so $\Theta$ is symmetric.

Let us suppose that a non trivial $\mathbb{F}_{p^k}$-linear combination of $\Theta_1,\dots,\Theta_m$ is non-zero, in other words let
        \begin{equation*}
           \displaystyle\sum_{i=1}^{m} a_i\Theta_i =0\quad \ \text{with } a_i\in \mathbb{F}_{p^k}\ \text{and } a_s\neq 0,\text{ for some }s\in \lbrace 1,\dots,m\rbrace.
       \end{equation*}
        Then 
        \begin{equation*}
            \gamma_{a_1 e_1+\dots+a_m e_m}=
            \begin{pmatrix}
                \mathds{1}_m & a_1 \Theta_1+\dots+a_m\Theta_m\\
                0 & \mathds{1}_d
            \end{pmatrix}
            =\mathds{1}_n,
        \end{equation*}
and $a_1 e_1+\dots+a_m e_m\in \mathrm{Ann}(V)$, which is a contradiction since $\mathrm{Ann}(V)=\mathrm{span}\{e_{m+1},\dots,e_n\}$.
\end{proof}
Conversely we have the following.
\begin{theorem}\label{thm:theoremtheta}
Any $m\times m$ symmetric matrix $\Theta=[\Theta_1, \dots, \Theta_m]$, with entries in $(\mathbb{F}_{p^k})^d$, such that each non trivial $\mathbb{F}_{p^k}$-linear combination of $\Theta_1, \dots, \Theta_m$ is non-zero, is the defining matrix with respect to the canonical basis of a commutative, $3$-nilpotent  $\mathbb{F}_{p^k}$-algebra, with $\mathrm{Ann}(V)=\mathrm{span}\{e_{m+1},\dots,e_n\}$ of dimension $d=n-m$.
\end{theorem}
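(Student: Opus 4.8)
The plan is to run Lemma~\ref{theorem:theoremtheta} in reverse: use the prescribed matrix $\Theta$ to \emph{define} a product on $V$ through its action on the canonical basis, and then check each required property in turn. First I would set, for $1\le i,j\le m$, the product $e_i\cdot e_j\defeq(\underbrace{0,\dots,0}_m,\Theta_{i,j})$, where $\Theta_{i,j}\in(\mathbb{F}_{p^k})^d$ is the $(j,i)$-entry of $\Theta$ (equivalently the $j$-th row of the block $\Theta_i$), placed in coordinates $m+1,\dots,n$; and $e_i\cdot e_j\defeq 0$ whenever $i>m$ or $j>m$. Extending $\mathbb{F}_{p^k}$-bilinearly gives an $\mathbb{F}_{p^k}$-algebra $(V,+,\cdot)$. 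By design every product $e_i\cdot e_j$ lies in $\mathrm{span}\{e_{m+1},\dots,e_n\}$, so $V\cdot V\subseteq\mathrm{span}\{e_{m+1},\dots,e_n\}$, while any basis vector of index $>m$ annihilates everything, so $\mathrm{span}\{e_{m+1},\dots,e_n\}\cdot V=0$.

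Consequently $V^3=0$, and associativity is then immediate, since $(a\cdot b)\cdot c$ and $a\cdot(b\cdot c)$ both vanish; commutativity follows from the symmetry $\Theta_{i,j}=\Theta_{j,i}$ of $\Theta$. Thus $(V,+,\cdot)$ is a commutative associative nilpotent $\mathbb{F}_{p^k}$-algebra of index $3$. Next I would verify that the ring is radical by exhibiting quasi-inverses for the circle operation $a\circ b=a+b+a\cdot b$: because $V^3=0$, for each $a\in V$ the element $a'\defeq -a+a\cdot a$ satisfies $a\circ a'=0$ by a one-line computation using $a\cdot a\cdot a=0$, so every element is invertible and the ring is radical. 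Alternatively this is exactly the content of Theorem~\ref{thm:radical} applied to the present algebra.

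The key step is to show that the annihilator is \emph{exactly} $\mathrm{span}\{e_{m+1},\dots,e_n\}$, and this is where hypothesis (2) enters. The inclusion $\mathrm{span}\{e_{m+1},\dots,e_n\}\subseteq\mathrm{Ann}(V)$ is clear from the definition. For the reverse inclusion I take $a=\sum_{i=1}^n a_i e_i\in\mathrm{Ann}(V)$; testing against $e_j$ for $1\le j\le m$ and using bilinearity gives $a\cdot e_j=(0,\dots,0,\sum_{i=1}^m a_i\Theta_{i,j})$, so membership in the annihilator forces $\sum_{i=1}^m a_i\Theta_{i,j}=0$ for every $j$, that is, $\sum_{i=1}^m a_i\Theta_i=0$ as an $m\times d$ matrix. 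Hypothesis (2), namely the $\mathbb{F}_{p^k}$-linear independence of $\Theta_1,\dots,\Theta_m$, then forces $a_1=\dots=a_m=0$, so $a\in\mathrm{span}\{e_{m+1},\dots,e_n\}$ and $\dim\mathrm{Ann}(V)=d$.

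Finally, unwinding the definitions, the gamma function $\gamma_{e_i}\colon b\mapsto b+e_i\cdot b$ has matrix block $\Theta_{e_i}=\Theta_i$ with respect to the canonical basis, exactly as in Theorem~\ref{formagamma}, so $\Theta$ is precisely the defining matrix of the algebra just constructed. I expect the annihilator computation to be the only genuinely delicate point, since everything else is a direct consequence of $V^3=0$; the care needed there amounts to matching hypothesis (2) to the statement that no nonzero vector of $\mathrm{span}\{e_1,\dots,e_m\}$ lies in $\mathrm{Ann}(V)$.
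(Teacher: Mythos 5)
Your proof is correct, but it takes a genuinely different route from the paper. The paper never defines the product directly: it builds, from $\Theta$, the affine maps $\tau_a=\gamma_a\sigma_a$, forms $T_\circ=\{\tau_a\mid a\in V\}<\mathrm{AGL}(V,+)$, and then verifies by hand that $T_\circ$ is an abelian regular subgroup normalized by $T_+$ (the quasi-inverse appears there as $\tau_a^{-1}=\tau_a^{p-1}$, with oddness of $p$ entering through $1+\dots+(p-1)=\tfrac{(p-1)p}{2}\equiv 0 \pmod p$); the conclusion that $(V,+,\cdot)$ is an associative, commutative, radical algebra with $V^3=0$ is then obtained by quoting the dictionary of Theorem~\ref{thmbrace} and Proposition~\ref{propbibrace}. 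You instead work entirely on the ring side: defining $e_i\cdot e_j$ from $\Theta$ makes $V^3=0$ hold by construction, so associativity is trivial, commutativity is exactly the symmetry of $\Theta$, and radicality follows from the explicit quasi-inverse $a'=-a+a\cdot a$. What your approach buys is elementarity and, more substantively, an explicit verification that $\mathrm{Ann}(V)$ equals $\mathrm{span}\{e_{m+1},\dots,e_n\}$ --- this is precisely where hypothesis (2) on $\Theta$ is needed, and it is the one claim of the theorem that the paper's proof leaves implicit (its group-theoretic checks never invoke hypothesis (2)). What the paper's approach buys is coherence with the brace/regular-subgroup framework that the rest of the article depends on, re-instantiating the correspondence concretely rather than assuming the product can be prescribed freely. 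One small caveat on your write-up: the remark that radicality is ``exactly the content of Theorem~\ref{thm:radical}'' is not quite accurate, since that theorem concerns the algebra induced by a given regular subgroup $T_\circ\leq\mathrm{AGL}(V,+)$ under Assumption~\ref{assum1}, not an abstractly defined product; but your explicit quasi-inverse computation makes this appeal unnecessary, so it is not a gap.
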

\begin{proof}
Let $\Theta$ be an $m\times m$ symmetric matrix with entries in $(\mathbb{F}_{p^k})^d$ such that each $\mathbb{F}_{p^k}$-linear combination of its columns $\Theta_1,\dots,\Theta_m$ is non-zero, except the one with all zero coefficients.

For $a=a_1 e_1+\cdots +a_n e_n\in V$, let us  define the map $\tau_a=\gamma_a\sigma_a$, where
\begin{equation*}
    \gamma_a=\begin{pmatrix}
        \mathbb{1}_{m}&a_1\Theta_1+\cdots +a_{m}\Theta_{m}\\
        0 &\mathbb{1}_d
    \end{pmatrix},
\end{equation*}
and $\sigma_a$ is the translation of $(V,+)$ sending $0\mapsto a$.
Let $$T_\circ=\lbrace \tau_a\mid  a\in V\rbrace < \mathrm{AGL}(V,+),$$ and $\circ$ be the operation induced by $T_\circ$ on $V$ as in Equation \eqref{defcirc}.

By Theorem \ref{thmbrace} and Proposition \ref{propbibrace}, in order to prove that $(V,+,\cdot)$ is a commutative, $3$-nilpotent  algebra, it is enough to prove that $T_\circ$ is an abelian regular subgroup of $\mathrm{AGL}(V,+)$ normalized by $T_{+}=\{\sigma_a\mid a\in V\}$.
\begin{itemize}
    \item $T_\circ$ is a group. The neutral element is $\tau_0=\mathbb{1}_V$ and $\circ$ is associative by definition.
    We claim that $\tau_x^{-1}=\tau_x^{p-1}$. Indeed,  for every $a,b\in V$ we have
    \begin{align*}
    a\underbrace{\tau_b\dots\tau_b}_{\textit{p-times}}&=a\underbrace{\gamma_b\dots \gamma_b}_p+b\underbrace{\gamma_b\dots \gamma_b}_{p-1}+\dots+b\gamma_b+b\\
    &=a+b(\underbrace{\gamma_b\dots \gamma_b}_{p-1}+\underbrace{\gamma_b\dots \gamma_b}_{p-2}+\dots+\gamma_b+\mathds{1}_n),
\end{align*}
since $\gamma_b^p=\gamma_{pb}=\mathbb{1}_n$. Notice that $1+\dots +(p-1)=\displaystyle\frac{(p-1)p}{2}=kp$, for $k=\frac{p-1}{2}$ positive integer. Thus
\begin{align*}
    \underbrace{\gamma_b\dots \gamma_b}_{p-1}+\underbrace{\gamma_b\dots \gamma_b}_{p-2}+\dots+\gamma_b&=
    \begin{pmatrix}
        (p-1)\mathds{1}_m & \displaystyle\sum_{i=1}^{p-1} \Theta_b\\
        0 & (p-1)\mathds{1}_d
    \end{pmatrix}\\
    &=\begin{pmatrix}
        (p-1)\mathds{1}_m & 0\\
        0 & (p-1)\mathds{1}_d
    \end{pmatrix}
    \end{align*}
    and so $\underbrace{\gamma_b\dots \gamma_b}_{p-1}+\underbrace{\gamma_b\dots \gamma_b}_{p-2}+\dots+\gamma_b+\mathds{1}_n=0$.
It follows that $a\underbrace{\tau_b\dots\tau_b}_{p}=a$.
\item $T_\circ$ is abelian. Indeed, for every $a,b,c\in V$
\begin{align*}
a\tau_c\tau_b=a\gamma_c\sigma_c\gamma_b\sigma_b&=a\gamma_c\gamma_b+c\gamma_b+b\\
 &=a\gamma_b\gamma_c+b\gamma_c+c\\
 &=a\tau_b\tau_c.
\end{align*}
\item $T_\circ$ is regular, or in other words, for every $ a,b\in V$ there exists a unique $c\in V$ such that $a\tau_c=b$. We claim that $c=a\underbrace{\tau_a\dots\tau_a}_{p-2}\tau_b$. Indeed,
\begin{align*}
a\tau_c&=a\tau_{a\tau_a\dots\tau_a\tau_b}=a\gamma_{a\gamma_a\sigma_a\dots \gamma_a\sigma_a \gamma_b\sigma_b}\sigma_{a\tau_a\dots\tau_a\tau_b}\\
&=a\gamma_{a\underbrace{\gamma_a\dots \gamma_a}_{p-2} \gamma_b+a\underbrace{\gamma_a\dots \gamma_a}_{p-3}\gamma_b+\dots+a\gamma_b+b}+\\
    &+a\underbrace{\gamma_a\dots \gamma_a}_{p-2} \gamma_b+a\underbrace{ \gamma_a\dots \gamma_a}_{p-3} \gamma_b+\dots+a \gamma_a \gamma_b+a \gamma_b+b.
\end{align*}

Notice that for every $a,b\in V$ we have
$$\gamma_{a \gamma_b}=\gamma_{-b+b\circ a}=\gamma_{-b}\gamma_{b}\gamma_a= \gamma_a.$$ 
Therefore
\begin{align*}
    a\tau_c&=a\underbrace{ \gamma_a\dots \gamma_a}_{p-1} \gamma_b+a\underbrace{ \gamma_a\dots \gamma_a}_{p-2} \gamma_b+\dots+a \gamma_a \gamma_b+a \gamma_b+b\\
    &=a(\underbrace{ \gamma_a\dots \gamma_a}_{p-1}+\underbrace{ \gamma_a\dots \gamma_a}_{p-2}+ \gamma_a+\mathds{1}_n) \gamma_b+b\\
    &=b.
\end{align*}
Let us assume by contradiction that $c$ is not unique. Let $\Bar{c}\in V$, with $\Bar{c}\neq c $, such that $a\tau_{\Bar{c}}=b$ or analogously that $\Bar{c}\tau_a=b$.
\begin{equation*}
    \Bar{c}=b\tau_a^{-1}=b\underbrace{\tau_a\dots\tau_a}_{p-1}=a\tau_b\underbrace{\tau_a\dots\tau_a}_{p-2}=a\underbrace{\tau_a\dots\tau_a}_{p-2}\tau_b=c.
\end{equation*}
\item $T_+ <N_{\textit{Sym}(V)}(T_{\circ})$, or in other words, for every $a,b\in V$, $\sigma_a \tau_b\sigma_a^{-1}\in T_\circ$. Indeed
\begin{align*}
x\sigma_a\tau_b\underbrace{\sigma_a\dots\sigma_a}_{p-1}&=(x+a) \gamma_b\sigma_b\sigma_a\dots\sigma_a\\
    &=x \gamma_b+a \gamma_b+b+\underbrace{a+\dots+a}_{p-1}\\
    &=x\tau_{a \gamma_b+b+(p-1)a}, 
\end{align*}
since $\gamma_{a\gamma_b+b+(p-1)a}=\gamma_a\gamma_b\gamma_{(p-1)a}=\gamma_b$.\qedhere
\end{itemize}
\end{proof}
In the rest of the present section we consider true the following assumption, that allow us to study the product in \(V\) as a standard symmetric bilinear form. 
\begin{assumption}
    $V\cdot V$ is a one-dimensional subspace of $V$.
\end{assumption}
Let us define the symmetric bilinear form 
  \begin{equation*}
        \begin{array}{rccc}
b:&V\times V &\longrightarrow &\mathbb{F}_{p^k}\\
   &(a,b) &\longmapsto & a\cdot b
\end{array}
\end{equation*}

The following result directly follows from Lemma \ref{theorem:theoremtheta}.
\begin{corollary}
If $V\cdot V$ is a one-dimensional subspace of $V$,  then $\Theta$ is an $m\times m$ invertible matrix with coefficients in $\mathbb{F}_{p^k}$.
\end{corollary}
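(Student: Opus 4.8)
The plan is to use the hypothesis $\dim(V\cdot V)=1$ to collapse the vector-valued defining matrix $\Theta$ into an ordinary scalar $m\times m$ matrix over $\mathbb{F}_{p^k}$, and then to read off invertibility directly from the linear-independence clause of Lemma \ref{theorem:theoremtheta}.

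First I would fix a generator of $V\cdot V$. Since $V\cdot V\subseteq\mathrm{Ann}(V)$ by Equation \eqref{VVinAnn} and $V\cdot V$ is one-dimensional, I may arrange the basis of $\mathrm{Ann}(V)$ so that $V\cdot V=\mathrm{span}\{e_{m+1}\}$. Every product then takes the form $e_i\cdot e_j=c_{ij}e_{m+1}$ with $c_{ij}\in\mathbb{F}_{p^k}$, so the $d$-vector $\Theta_{i,j}$ recording $e_i\cdot e_j$ equals $(c_{ij},0,\dots,0)$. Identifying $V\cdot V$ with $\mathbb{F}_{p^k}$ via $e_{m+1}\mapsto 1$, the defining matrix becomes the scalar matrix $C=(c_{ij})\in(\mathbb{F}_{p^k})^{m\times m}$, which is symmetric by Lemma \ref{theorem:theoremtheta}(1).

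It then remains to show that $C$ is invertible. Under the identification above, $\Theta_i$—whose $j$-th row is $e_i\cdot e_j$—collapses to the column vector with entries $c_{i1},\dots,c_{im}$, that is, to the $i$-th row of $C$. Hence a linear combination $\sum_{i=1}^m a_i\Theta_i$ vanishes precisely when $\sum_{i=1}^m a_i c_{ij}=0$ for every $j\in\{1,\dots,m\}$. Lemma \ref{theorem:theoremtheta}(2) asserts that this forces all $a_i=0$, so the rows of $C$ are $\mathbb{F}_{p^k}$-linearly independent and $C$ has rank $m$; therefore $C$ is invertible.

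No deep obstacle is expected: the whole content is the passage from the vector-valued $\Theta$ to the scalar matrix $C$, which is legitimate exactly because the one-dimensionality of $V\cdot V$ confines every product to a single line. The only point requiring care is the bookkeeping—verifying that $\Theta_i$ corresponds to a row (rather than a column) of $C$, so that the independence of $\Theta_1,\dots,\Theta_m$ provided by Lemma \ref{theorem:theoremtheta}(2) translates into the full-rank condition for $C$; once this is settled, invertibility is immediate.
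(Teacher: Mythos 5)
Your proof is correct and matches the paper's reasoning: the paper gives no explicit argument, stating only that the corollary ``directly follows from Lemma \ref{theorem:theoremtheta},'' and your write-up is precisely the expansion of that claim --- using $\dim(V\cdot V)=1$ to identify the vector-valued entries $\Theta_{i,j}$ with scalars, then invoking part (2) of the lemma to get linear independence of the rows and hence invertibility.
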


\begin{remark}\label{Ann=MM}
Notice that $\mathrm{Rad}(b)=\lbrace a\in V\mid b(a,V)=0\rbrace$  coincides with $\mathrm{Ann}(V)=\{a\in V\mid a\cdot V=0\}$. So, by Equation \eqref{VVinAnn}, we can decompose the annihilator $\mathrm{Ann}(V)=(V\cdot V)\oplus H$, for a suitable $(d-1)$-dimensional subspace $H$ of $V$. Thus, up to consider the quotient algebra $(V/H,+,\cdot)$, we can assume that $V\cdot V$ coincides with $\mathrm{Ann}(V)$ and spanned by the last vector of the canonical basis. We obtain the following decomposition of $V$
\begin{equation*}
    V=W\oplus (V\cdot V)
\end{equation*}
where $V\cdot V=\mathrm{span}\lbrace e_n \rbrace$.
\end{remark}
We notice that $\Theta=[\Theta_1,\dots,\Theta_{n-1}]$ is the matrix associated to the bilinear form $b$ restricted to the subspace $W$ of $V$ defined above.\medskip

Before proving the main theorem of this work, we recall some useful general definitions and some well known results concerning the classification of bilinear forms over finite fields of odd characteristic.

\begin{definition}
Let $b$ a nondegenerate symmetric bilinear form on an $\mathbb{F}_{p^k}$-vector space with associated matrix $B$. The discriminant of $b$ is defined to be the coset $\mathrm{det}(B)\mathbb{F}_{p^k}^{\times 2}$ in $\mathbb{F}_{p^k}^\times /\mathbb{F}_{p^k}^{\times 2}.$
\end{definition}

\begin{theorem}{\cite[Proposition 5, pg. 34]{serre}}\label{classificationbilforms}
Let $b$ be a nondegenerate symmetric bilinear form on a $\mathbb{F}_{p^k}$-vector space $V$ of dimension strictly bigger than 1, then there exists a basis for $V$ such that the matrix $B$ associated to $b$ has one of the following non-equivalent diagonal form 
\begin{equation*}
\left[\begin{array}{cccc}
1&0&0&0\\
0&1&0&\vdots\\
\vdots &0 &\ddots &0\\
0&\cdots&0&1\\
\end{array}\right]\text{ or  }
\left[\begin{array}{cccc}
1&0&0&0\\
0&\ddots&0&\vdots\\
\vdots &0 &1 &0\\
0&\cdots&0&q\\
\end{array}\right],
\end{equation*} 
where $q$ is a non-square element of $\mathbb{F}_{p^k}$.
\end{theorem}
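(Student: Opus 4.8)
The plan is to prove the result in three stages: diagonalize $b$, normalize each diagonal entry using the square-class structure of $\mathbb{F}_{p^k}^{\times}$, and finally collapse pairs of nonsquare entries, so that the discriminant is the only surviving invariant and exactly the two stated normal forms remain. First I would diagonalize. Since $p$ is odd, the polarization identity $2b(u,v) = b(u+v,u+v) - b(u,u) - b(v,v)$ shows that if $b(v,v) = 0$ for every $v \in V$ then $b \equiv 0$, contradicting nondegeneracy; hence there is an anisotropic vector $e$ with $b(e,e) \neq 0$. Then $V = \langle e \rangle \oplus \langle e \rangle^{\perp}$ is an orthogonal decomposition on which $b$ restricts to a nondegenerate form on $\langle e \rangle^{\perp}$, and inducting on $\dim V$ produces an orthogonal basis, so that the associated matrix becomes $B = \mathrm{diag}(a_1, \dots, a_n)$ with every $a_i \in \mathbb{F}_{p^k}^{\times}$ by nondegeneracy.

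Next I would normalize the diagonal entries. Replacing a basis vector $e_i$ by $\lambda e_i$ scales $a_i$ by $\lambda^2$, so each $a_i$ may be multiplied by an arbitrary nonzero square. Because $p$ is odd, the squaring map on $\mathbb{F}_{p^k}^{\times}$ has kernel $\{\pm 1\}$, whence $[\mathbb{F}_{p^k}^{\times} : (\mathbb{F}_{p^k}^{\times})^2] = 2$ and there are exactly two square classes, represented by $1$ and by a fixed nonsquare $q$. After this rescaling we may therefore assume each $a_i \in \{1, q\}$.

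The hard part, which I expect to be the crux, is to reduce the number of diagonal entries equal to $q$ to at most one. Here I would use that any nondegenerate binary form over $\mathbb{F}_{p^k}$ is universal, i.e.\ represents every field element: given $\alpha, \beta \in \mathbb{F}_{p^k}^{\times}$ and a target $c$, the sets $\{\alpha x^2 : x \in \mathbb{F}_{p^k}\}$ and $\{c - \beta y^2 : y \in \mathbb{F}_{p^k}\}$ each have cardinality $(p^k+1)/2$, so their combined size exceeds $p^k$ and they must intersect, producing a solution of $\alpha x^2 + \beta y^2 = c$. Taking $\alpha = \beta = q$ and $c = 1$ shows that $\mathrm{diag}(q, q)$ represents $1$; splitting off a vector of norm $1$ leaves a one-dimensional form whose discriminant is $q \cdot q = q^2$, a square, so $\mathrm{diag}(q,q) \cong \mathrm{diag}(1,1)$. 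Repeatedly pairing off the $q$'s thus leaves at most a single $q$. Finally, since a congruence $P^{\top} B P$ multiplies $\det(B)$ by the square $(\det P)^2$, the discriminant $\det(B) \bmod (\mathbb{F}_{p^k}^{\times})^2$ is an invariant that decides which case occurs: a square discriminant forces the all-ones form, and a nonsquare discriminant forces exactly one entry equal to $q$. This yields the two stated normal forms and completes the argument.
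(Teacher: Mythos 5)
Your argument is correct, but note that the paper does not actually prove this statement: it is quoted as a known result from Serre (\emph{A Course in Arithmetic}, Proposition 5, p.~34) and used as a black box, so there is no internal proof to compare against. Your write-up is essentially a self-contained reconstruction of the classical argument. The decisive ingredient in both your proof and Serre's is the pigeonhole counting bound showing that a nondegenerate binary form over $\mathbb{F}_{p^k}$ is universal (in Serre this is the proposition that any form of rank $\geq 2$ represents every element of $\mathbb{F}_{p^k}^{\times}$). The organization differs slightly: Serre inducts on the rank by splitting off a vector of norm $1$ at each step, so that the last one-dimensional piece carries the discriminant, whereas you first build an orthogonal basis via polarization (valid since $p$ is odd), normalize each diagonal entry into $\{1,q\}$ using the fact that the squares form an index-two subgroup of $\mathbb{F}_{p^k}^{\times}$, and then collapse pairs via $\mathrm{diag}(q,q)\cong\mathrm{diag}(1,1)$. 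Both routes rest on the same two facts, universality of binary forms and invariance of the discriminant modulo squares, and your version is complete: the existence of an anisotropic vector, the counting bound $(p^k+1)/2+(p^k+1)/2>p^k$, and the discriminant computation that separates the two normal forms are all correctly justified. Your final paragraph even establishes the inequivalence of the two normal forms, which the quoted statement does not ask for but which the paper needs later (it is the content of its Corollary to Serre's Proposition 5), so nothing is missing.
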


\begin{corollary}{\cite[Corollary of Proposition 5, pg. 35]{serre}}\label{corbilform}
For two nondegenerate bilinear forms over $\mathbb{F}_{p^k}$ to be equivalent it is necessary and sufficient that they have the same rank and same discriminant.
\end{corollary}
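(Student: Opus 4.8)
The plan is to derive the corollary from the normal forms provided by Theorem~\ref{classificationbilforms}, together with the elementary fact that, since $p$ is odd, the squaring map on $\mathbb{F}_{p^k}^\times$ has kernel $\{\pm 1\}$, so that $\mathbb{F}_{p^k}^\times/\mathbb{F}_{p^k}^{\times 2}$ is a group of order exactly $2$. The whole statement then reduces to checking that the two invariants in question (rank and discriminant) are preserved under congruence and that together they separate the two possible normal forms.

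First I would treat necessity. If $b$ and $b'$ are equivalent, with associated matrices $B,B'$, then $B'={}^{t}P\,B\,P$ for some invertible matrix $P$ over $\mathbb{F}_{p^k}$. Comparing ranks gives $\mathrm{rank}(B')=\mathrm{rank}(B)$, and comparing determinants gives $\det(B')=(\det P)^2\det(B)$; since $(\det P)^2\in\mathbb{F}_{p^k}^{\times 2}$, the determinants lie in the same coset and the discriminants coincide. In particular, the two normal forms $\mathrm{diag}(1,\dots,1)$ and $\mathrm{diag}(1,\dots,1,q)$ of a fixed size, having determinants $1$ and $q$ (a nonsquare), lie in the two distinct classes $\mathbb{F}_{p^k}^{\times 2}$ and $q\,\mathbb{F}_{p^k}^{\times 2}$, and are therefore \emph{not} equivalent to one another.

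For sufficiency, I would use that a nondegenerate form has rank equal to the dimension of its space, so equal ranks mean $b$ and $b'$ live on spaces of the same dimension $r$. By Theorem~\ref{classificationbilforms} each of $b,b'$ is equivalent to one of the two size-$r$ normal forms above; by the previous paragraph these two forms are distinguished precisely by their discriminant, which (being one of only two possible classes) therefore determines which normal form occurs. Hence if $b$ and $b'$ share the same discriminant they are equivalent to the same normal form, and transitivity of equivalence yields $b\sim b'$. The dimension-one case excluded from Theorem~\ref{classificationbilforms} is immediate, as $\langle a\rangle\sim\langle a'\rangle$ if and only if $a/a'$ is a square, i.e.\ if and only if the discriminants agree.

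The argument is essentially bookkeeping on top of the classification theorem, and I expect no substantive obstacle. The only points needing care are that the discriminant is genuinely a congruence invariant — which rests on $(\det P)^2$ being a square — and that the two size-$r$ normal forms fall in different square classes; both use crucially that $p$ is odd, so that $\mathbb{F}_{p^k}^{\times 2}$ has index exactly $2$ in $\mathbb{F}_{p^k}^\times$.
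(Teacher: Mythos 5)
Your proof is correct. Note that the paper itself gives no proof of this statement---it is quoted directly from Serre with a citation---so the relevant comparison is with the cited source, and your argument (necessity from invariance of rank and of the determinant's square class under congruence, sufficiency by reducing both forms to the diagonal normal forms of Theorem~\ref{classificationbilforms} and observing that the discriminant separates the two) is exactly the standard derivation used there, including the correct handling of the rank-one case and the crucial use of odd characteristic to get $[\mathbb{F}_{p^k}^{\times}:\mathbb{F}_{p^k}^{\times 2}]=2$.
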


We are ready to prove the one to one correspondence between the isomorphism classes of $(V,+,\cdot)$ and congruence classes of the corresponding defining matrices.

\begin{theorem}\label{isomorfismi}
Let $(V,+,\cdot_1)$ and $(V,+,\cdot_2)$ be two commutative, $3$-nilpotent  $\mathbb{F}_{p^k}$-algebras with $V\cdot_1 V=V\cdot_2 V=\mathrm{span}\lbrace e_n\rbrace$. A matrix with block form
\begin{equation*}
    \begin{pmatrix}
   A & 0\\
   0 & l
    \end{pmatrix},\quad A\in GL(m,p^k),\ l\in \{1,q\}
\end{equation*}
is an isomorphism between the algebras $(V,+,\cdot_1)$ and $(V,+,\cdot_2)$ if and only if
\begin{equation}\label{eqeccoqui}
    A\Theta^1A^{tr}=l\Theta^2,
\end{equation}
where $\Theta^1$ and $\Theta^2$ are the defining matrices of the algebras $(V,+,\cdot_1)$ and $(V,+,\cdot_2)$ respectively and $q$ is a non-square element of $\mathbb{F}_{p^k}^\times$.
\end{theorem}
\begin{proof}
Let $A$ be an $(n-1)\times (n-1)$ invertible matrix and $l\in \mathbb{F}_{p^k}^\times$ such that $A\Theta^1 A^{tr}=l\Theta^2$. For $i,j=1,\ldots,n-1$
\begin{align*}
    e_i\begin{pmatrix}
        A&0\\
        0&l
    \end{pmatrix}\cdot_1 e_j\begin{pmatrix}
        A&0\\
        0&l
    \end{pmatrix}&=(A_i,0)\cdot_1(A_j,0)\\
    &=(\underbrace{0,\dots,0}_{n-1},A_i\Theta_{i,j}^1A_j^{tr})\\
    &=(0,\dots,0,l\Theta_{i,j}^2)\\
    &=(0,\dots,0,\Theta_{i,j}^2)\begin{pmatrix}
        A&0\\
        0&l
    \end{pmatrix}\\
    &=e_i\cdot_2 e_j \begin{pmatrix}
        A&0\\
        0&l
    \end{pmatrix}.
\end{align*}
Viceversa if $\begin{pmatrix}
        A&0\\
        0&l
    \end{pmatrix}$ is an isomorphism between $(V,+,\cdot_1)$ and $(V,+,\cdot_2)$ then
    \begin{equation*}
        e_i\begin{pmatrix}
        A&0\\
        0&l
    \end{pmatrix}\cdot_1 e_j\begin{pmatrix}
        A&0\\
        0&l
    \end{pmatrix}=e_i\cdot_2e_j\begin{pmatrix}
        A&0\\
        0&l
    \end{pmatrix}.
    \end{equation*}
Lastly , if $(V,+,\cdot_1)$ and $(V,+,\cdot_2)$ satisfies Equation \eqref{eqeccoqui} for some invertible matrix $A$, then the bilinear forms associated to the products are equivalent.\ By Corollary~\ref{corbilform}, it is enough to consider $l\in \mathbb{F}_{p^k}^\times/\mathbb{F}_{p^k}^{\times 2}$ or equivalently $l\in \lbrace 1,q\rbrace$, where $q$ is a pre-selected non-square of  $\mathbb{F}_{p^k}$.\qedhere
\end{proof}

We are now ready to give a complete classification of the isomorphism classes of commutative, $3$-nilpotent  $\mathbb{F}_{p^k}$-algebras $(V,+,\cdot)$ with $V\cdot V$ of dimension 1.
\begin{theorem}\label{theorem:theoremmain1}
Let $(V,+,\cdot)$ be an $\mathbb{F}_{p^k}$-algebra as above. If\ $V\cdot V$ is an one-dimensional subspace of $V$, then there are two isomorphism classes of the algebra if $n-1$ is even and there is one class is $n-1$ is odd.
\end{theorem}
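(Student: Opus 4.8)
The plan is to translate the classification of algebras into a counting problem for symmetric bilinear forms, and then track how multiplication by the scalar $l$ interacts with the discriminant. By Remark~\ref{Ann=MM} we may assume $\mathrm{Ann}(V)=V\cdot V=\mathrm{span}\{e_n\}$, so that $m=n-1$ and the defining matrix $\Theta$ is an ordinary $m\times m$ symmetric matrix over $\mathbb{F}_{p^k}$; moreover it is invertible, hence nondegenerate of full rank $m$. By Theorem~\ref{thm:theoremtheta} every invertible symmetric $m\times m$ matrix is the defining matrix of such an algebra, and by Theorem~\ref{isomorfismi} two such algebras $(V,+,\cdot_1)$ and $(V,+,\cdot_2)$ are isomorphic precisely when $A\Theta^1A^{tr}=l\Theta^2$ for some $A\in\mathrm{GL}(m,p^k)$ and $l\in\{1,q\}$. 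Thus the isomorphism classes of the algebras are in bijection with the orbits of nondegenerate symmetric $m\times m$ matrices under the combined action of congruence $\Theta\mapsto A\Theta A^{tr}$ and scaling $\Theta\mapsto l\Theta$ with $l\in\{1,q\}$.

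First I would count the congruence classes alone. Since all the matrices $\Theta$ in play are nondegenerate of the same rank $m$, Corollary~\ref{corbilform} says that their congruence classes are determined solely by the discriminant $\det(\Theta)\,(\mathbb{F}_{p^k}^\times)^2\in\mathbb{F}_{p^k}^\times/(\mathbb{F}_{p^k}^\times)^2$. As $p$ is odd this quotient has exactly two elements, represented by $1$ and by a fixed nonsquare $q$, so there are exactly two congruence classes of defining matrices.

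It remains to determine when the scaling by $q$ identifies these two classes, and this is the only real point of the argument. Since $\det(q\Theta)=q^m\det(\Theta)$, the discriminant of $q\Theta$ equals $q^m\det(\Theta)$ modulo squares. If $m=n-1$ is even then $q^m$ is a square, the discriminants of $\Theta$ and of $q\Theta$ coincide, the scaling preserves each congruence class, and the two classes survive as two distinct isomorphism classes. If $m=n-1$ is odd then $q^m\in q\,(\mathbb{F}_{p^k}^\times)^2$ is a nonsquare, so scaling by $q$ exchanges the square and nonsquare discriminants; the two congruence classes are thereby merged into a single orbit, giving exactly one isomorphism class. The main obstacle is precisely this parity bookkeeping for $\det(q\Theta)=q^m\det(\Theta)$; everything else is a direct application of Theorems~\ref{isomorfismi} and~\ref{thm:theoremtheta} together with Corollary~\ref{corbilform}.
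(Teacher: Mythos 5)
Your proposal is correct and follows essentially the same route as the paper: reduce via Theorem~\ref{isomorfismi} and Corollary~\ref{corbilform} to equivalence of the bilinear forms, then track the discriminant of the scaled matrix ($\det(q\Theta)=q^{n-1}\det(\Theta)$, which is the paper's $q^n$ computation for its canonical representatives) and conclude by parity. Your orbit-theoretic phrasing and explicit appeal to Theorem~\ref{thm:theoremtheta} for realizability of both discriminant classes are minor presentational differences, not a different argument.
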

\begin{proof}
By Theorem \ref{classificationbilforms} it is enough to fix 
\begin{equation*}
 \Theta^1=
\left[\begin{array}{cccc}
1&0&0&0\\
0&1&0&\vdots\\
\vdots &0 &\ddots &0\\
0&\cdots&0&1\\
\end{array}\right],\ \Theta^2=
\left[\begin{array}{cccc}
1&0&0&0\\
0&\ddots&0&\vdots\\
\vdots &0 &1 &0\\
0&\cdots&0&q\\
\end{array}\right]
\end{equation*}
and by Proposition \ref{isomorfismi} it is enough to check whenever $\Theta^1$ and $q\Theta^2$ represent equivalent bilinear forms.
\begin{itemize}
    \item If $n-1$ is even, then  $\mathrm{det}(\Theta^1)=1$ is a square and $\mathrm{det}(q\Theta^2)=q^{n}$ is not a square. Then $\Theta^1$ and $q\Theta^2$ are not congruent.
    \item If $n-1$ is odd, then   $\mathrm{det}(\Theta^1)=1$ is a square and $\mathrm{det}(q\Theta^2)=q^{n}$ is a square. Then $\Theta^1$ and $q\Theta^2$ are congruent.\qedhere
\end{itemize}
\end{proof}

In conclusion, we notice that Theorem \ref{isomorfismi} and Theorem \ref{theorem:theoremmain1} only depend on the defining matrix of the algebras. Therefore the isomorphism classes of $(V,+,\cdot)$ are in one to one correspondence to the isomorphism classes of $V/H$, defined in Remark \ref{Ann=MM}. In other words the obtained results hold also in the case of an algebra $(V,+,\cdot)$ with $V\cdot V$ of dimension one and $\mathrm{Ann}(V)$ of any dimension.

\bibliographystyle{abbrv}
\bibliography{citation}
\end{document}